\newtheorem{theorem}{Theorem}[section]
\newtheorem{corollary}[theorem]{Corollary}
\newtheorem{exmp}[theorem]{Example}
\newtheorem{exmps}[theorem]{Examples}
\newtheorem{rem}[theorem]{Remark}
\newenvironment{remark}{\begin{rem}\rm}{\end{rem}\rm}
\newcommand{\qqed}{\hspace*{\fill}$\Box$}
\newcommand{\beeq}[1]{\begin{eqnarray}\label{#1}}
\newcommand{\eneq}{\end{eqnarray}}
\newcommand{\Db}{{\rm D}^{\rm b}}
\newcommand{\ZZ}{\mathbb{Z}}
\newcommand{\PP}{\mathbb{P}}
\newcommand{\CH}{{\rm CH}}
\newcommand{\NS}{{\rm NS}}
\newcommand{\Aut}{{\rm Aut}}
\newcommand{\id}{{\rm id}}
\newcommand{\rk}{{\rm rk}}
\newcommand{\ord}{{\rm ord}}
\newcommand{\verylongarrow}[1]{\hbox to #1{\rightarrowfill}}
\newcommand{\congpf}{\xymatrix@1@=15pt{\ar[r]^-\sim&}}
\renewcommand{\to}{\xymatrix@1@=15pt{\ar[r]&}}
\begin{document}

\title[Symplectic automorphisms of K3 surfaces of arbitrary order]{Symplectic automorphisms of K3 surfaces of arbitrary order}

\author[D.\ Huybrechts]{D.\ Huybrechts}
\address{{\rm Daniel Huybrechts}\\
Mathematisches Institut, Universit{\"a}t Bonn\\
 Endenicher Allee  60\\
 53115 Bonn\\
 Germany}
\email{huybrech@math.uni-bonn.de}

%\thanks{This work was supported by the SFB/TR 45 `Periods, Moduli Spaces and Arithmetic of Algebraic Varieties' of the DFG (German Research Foundation).}

\begin{abstract} It is observed that the existing results in \cite{HuyMSRI} and \cite{VoisinSympl} suffice to prove 
in complete generality that symplectic automorphisms of finite order of
a K3 surface $X$ act as $\id$ on the Chow group $\CH^2(X)$ of zero-cycles.
\end{abstract}
 \maketitle
 
In \cite{VoisinSympl}, Claire Voisin recently proved that 
symplectic involutions of a K3 surface $X$ act trivially on the Chow group
$\CH^2(X)$ of $0$-dimensional cycles. We shall explain that this
can be combined with a result in  \cite{HuyMSRI},  which in turn is based on
techniques involving the derived category of coherent sheaves $\Db(X)$ developed
in \cite{Huy}, to prove the following

\begin{theorem}\label{thm:main}
Let $f:X\congpf X$ be a symplectic automorphism of finite order of a complex
projective K3 surface $X$. Then $f^*={\rm id}$ on $\CH^2(X)$.
\end{theorem}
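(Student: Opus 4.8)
Since we only care about the action of $f^*$ on $\CH^2(X)=\CH_0(X)$, recall first that by Roitman's theorem this group is torsion-free: for a K3 surface the Albanese kernel is torsion-free and $\CH^2(X)=\ZZ\,c_X\oplus\CH^2(X)_{\hom}$, where $c_X$ is the Beauville--Voisin class, which is fixed by every automorphism. Hence it is equivalent to prove $f^*=\id$ on $\CH^2(X)_\QQ$, and one may freely switch between $\ZZ$- and $\QQ$-coefficients.

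The plan is to reduce to prime order and then invoke the two cited results. By Nikulin's classification a symplectic automorphism of finite order of a complex K3 surface has order $n\in\{1,\dots,8\}$, and the induced symmetry on the Mukai lattice $\widetilde H(X,\ZZ)$ (trivial on $H^0\oplus H^4$, and on $H^2$ of the shape prescribed by the invariant and coinvariant lattices) depends, up to conjugacy, only on $n$. The composite orders reduce to prime ones. (i) If $n=6$, then $f^2$ has order $3$ and $f^3$ has order $2$, so from the prime cases $(f^*)^2=\id$ and $(f^*)^3=\id$, whence $f^*=(f^*)^3\bigl((f^*)^2\bigr)^{-1}=\id$. (ii) If $n\in\{4,8\}$, I argue by induction with base case $n=2$: by induction $(f^2)^*=\id$ on $\CH^2(X)$, so $f^2$ acts trivially on $\CH^2(X)_\QQ$ and therefore $\CH^2(X)_\QQ=\CH^2(X)_\QQ^{\langle f^2\rangle}\cong\CH_0\bigl(X/\langle f^2\rangle\bigr)_\QQ$. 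As $f^2$ is symplectic with isolated fixed points, $X/\langle f^2\rangle$ has ADE singularities, so its minimal resolution $Y$ is again a projective K3 surface; since resolving rational singularities does not change $\CH_0\otimes\QQ$, we obtain a $\bar f$-equivariant isomorphism $\CH^2(X)_\QQ\cong\CH^2(Y)_\QQ$, where $\bar f$ is the induced symplectic involution of $Y$. Voisin's theorem applied to $\bar f$ then yields $f^*=\id$ on $\CH^2(X)_\QQ$, hence on $\CH^2(X)$. It remains to treat $n$ prime, i.e.\ $n\in\{2,3,5,7\}$.

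For $n=2$ this is exactly the main theorem of \cite{VoisinSympl}. For the odd primes $n\in\{3,5,7\}$ I would invoke the result of \cite{HuyMSRI}: there the action of a finite-order symplectic automorphism on $\CH^2(X)$ is analysed through the induced autoequivalence of $\Db(X)$ and its action on the Mukai lattice, using the derived-category techniques of \cite{Huy} --- spherical-twist autoequivalences, which act trivially on $\CH^2(X)_{\hom}$, together with deformation of the pair $(X,f)$ inside the irreducible moduli space of K3 surfaces carrying a symplectic $\ZZ/n$-action. Combined with Nikulin's observation that the relevant derived symmetry depends only on $n$, this either settles the odd-prime cases directly or reduces them to the involution case already handled by Voisin. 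Assembling (i), (ii) and the prime cases proves $f^*=\id$ on $\CH^2(X)$ for every finite order~$n$.

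\textbf{Where the difficulty lies.} All the substantial input is imported --- Voisin's triviality theorem for symplectic involutions and the derived-category machinery of \cite{Huy,HuyMSRI}. What has to be checked in the combination is merely that the elementary reductions above bring every admissible order down to $n=2$ or to an odd prime covered by \cite{HuyMSRI}, and that the comparison isomorphisms (with $Y$ above, and the Fourier--Mukai comparisons internal to \cite{HuyMSRI}) are equivariant for the residual automorphism, not just abstract isomorphisms of Chow groups. The genuine obstacle --- and the reason the techniques of \cite{Huy} are needed --- is that ``$f^*=\id$ on $\CH^2(X)$'' is not an obviously deformation-invariant or discrete condition, so propagating it along the moduli space and across derived equivalences is precisely what those techniques make possible.
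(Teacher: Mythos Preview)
Your reductions for the composite orders $n=4,6,8$ and the attribution of $n=2$ to Voisin match the paper's argument essentially verbatim. The gap is in your handling of the odd primes $p=3,5,7$. You write that one should ``invoke the result of \cite{HuyMSRI}'' and describe it loosely via spherical twists and deformation of $(X,f)$, hedging with ``either settles the odd-prime cases directly or reduces them to the involution case.'' But the relevant statement from \cite{HuyMSRI} (Theorem~6.5 there, recalled in the paper as Theorem~\ref{thm:Kneserappl}) does \emph{not} apply unconditionally: it requires the numerical hypotheses $\rk_2(\NS(X))\geq 4$ and $\rk_3(\NS(X))\geq 3$. There is no deformation argument in that proof, and nothing in \cite{HuyMSRI} reduces the odd primes to involutions.

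The actual content of the paper for $p=3,5,7$ is precisely the verification of these rank conditions. Since $f$ is symplectic, the coinvariant lattice $\Omega_p=(H^2(X,\ZZ)^f)^\perp$ sits inside $\NS(X)$, so it suffices to bound $\rk_2$ and $\rk_3$ of $\Omega_p$ from below. This is read off from the explicit descriptions of $\Omega_p$ due to Garbagnati--Sarti \cite{GS}: for $p=5,7$ the discriminant of $\Omega_p$ is a pure power of $p$, so its $2$- and $3$-ranks equal its full rank; for $p=3$ the $2$-rank is maximal and one exhibits by hand a rank-$3$ sublattice whose discriminant is prime to $3$. Without this lattice check your invocation of \cite{HuyMSRI} is not a proof --- indeed, the same criterion \emph{fails} for $p=2$ (there $\Omega_2\cong E_8(-2)$ has $2$-rank zero), which is exactly why Voisin's independent argument is indispensable in that case.
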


As explained at various places before, the result can be seen as evidence for the much more general
philosophy of Boch and Beilinson on filtrations of Chow groups and their relation
to  Hodge theory, see e.g.\ \cite{HuyMSRI, VoisinSympl}. 
The particular case of automorphisms $f:X\congpf X$ of a K3 surface 
with $f^*={\rm id}$ on $H^{2,0}(X)$ has been addressed explicitly in \cite{Huy}
in the context of autoequivalences of the derived category $\Db({\rm Coh}(X))$.
If the induced action $f^*:H^*(X,\ZZ)\congpf H^*(X,\ZZ)$ (on the full cohomology!) of
a symplectic automorphism $f:X\congpf X$ coincides with the action induced by
a composition $\prod T_{E_i}$ of spherical twists $T_{E_i}:\Db({\rm Coh}(X))\congpf \Db({\rm Coh}(X))$,
it was shown  that the action $f^*:\CH^2(X)\congpf\CH^2(X)$ is trivial.
The possibility of using this to prove $f^*=\id$ in general was studied in \cite{HuyMSRI}.
There,  a condition on the $2$-rank and the $3$-rank of
the N\'eron--Severi lattice $\NS(X)\subset H^2(X,\ZZ)$ was formulated, that
ensures that indeed  for any symplectic automorphism $f^*=\prod T_{E_i}$ on $H^*(X,\ZZ)$
for some spherical twists.
It was remarked that the condition on $\NS(X)$ is verified for a dense subset of K3 surfaces
$(X,f)$  with a symplectic involution, but that it definitely fails in general.
Since the case of symplectic  involutions seemed the most approachable, this attack  was
abandoned at the time. Instead, we  tried in \cite{HuyKem} to approach the case of involutions
by more geometric arguments and succeeded in proving the theorem in one third of all cases.
The same techniques (exis\-tence of nodal elliptic curves via deformation theory of stable maps)
apply more generally to symplectic automorphisms of higher order but yield the result
only for some components of the moduli space of $(X,f)$.

Cases of symplectic involutions on K3 surfaces of low degree have also been studied 
in \cite{GT,Ped,Voi,Voisin2}, but the best result has been obtained by Voisin in \cite{VoisinSympl}.
Instead of working with invariant elliptic curves as in \cite{HuyKem}, she uses Prym varieties to deduce finite
dimensionality (in the sense of Roitman) of the anti-invariant part of $\CH^2(X)$ which
is enough to prove the result. Unfortunately, the same idea seems to fail for symplectic automorphisms
of  order $>2$, as the dimensions of the linear systems and the Prym varieties do not match up.

It is curious to note, that for higher order automorphisms, where the beautiful geometric arguments of \cite{VoisinSympl} seemed to fail, the derived and less geometric techniques of \cite{HuyMSRI} work
without any further work (although they give only very partial results for
involutions). In fact, the point of this note is to remark that the 
case of symplectic automorphisms of higher order could have been settled already in \cite{HuyMSRI}.
If only I had appreciated the very useful classification results of \cite{GS} more and had put them to better use.

{\bf Acknowledgments:} Thanks to  A.\ Garbagnati, B.\ van Geemen, and C.\ Voisin for comments on a first version of this note.
I am particularly grateful to A.\ Sarti for comments on the case of order $4$ and $8$.

 \section{Proof}
 
 Let $X$ be a complex K3 surface and let $f:X\congpf X$ be a symplectic automorphism
 of prime order $p=\ord(f)$. Thus, the induced action of $f$ on $H^2(X,\ZZ)$ is the identity on
 $H^{2,0}(X)$ and hence on the transcendental lattice $T(X)$. As was shown by Nikulin in
 \cite{Nik}
 there are only the following possibilities: $p=2,3,5,7$. Of course, it suffices to settle Theorem
 \ref{thm:main}
 in these cases.
 
 \subsection{} If $H^2(X,\ZZ)$ is considered
 as an abstract lattice, then the $f$-invariant part and its orthogonal complement have been studied
 in great detail by Nikulin \cite{Nik}, Morrison \cite{Mor}, van Geemen and Sarti
 \cite{vGS}, and Garbagnati and Sarti \cite{GS}.
 As it turns out,  the invariant part  $H^2(X,\ZZ)^f$ and its orthogonal complement $(H^2(X,\ZZ)^f)^\perp$
only depend on the order $p$ and can be described abstractly. 

We shall briefly review the facts relevant for our purpose. The orthogonal complement
of $H^2(X,\ZZ)^f$ will simply be denoted $$\Omega_p:=(H^2(X,\ZZ)^f)^\perp.$$
Recall that $H^2(X,\ZZ)$ is isomorphic
to $U\oplus U\oplus U\oplus E_8(-1)\oplus E_8(-1)$.

i) $p=2$. Then $H^2(X,\ZZ)^f\cong U\oplus U\oplus U\oplus E_8(-2)$ and 
$\Omega_2\cong E_8(-2)$, which is a lattice of rank $8$ and discriminant $2^8$,
cf.\ \cite[Sec.\ 1.3]{vGS} or \cite[Thm.\ 4.1]{GS}.

ii) $p=3$. Then $H^2(X,\ZZ)^f\cong U\oplus U(3)\oplus U(3)\oplus A_2\oplus A_2$ and 
$\Omega_3$ is a lattice of rank $12$ and discriminant $3^6$, cf.\ \cite[Thm.\ 4.1, Prop.\ Ê4.2]{GS}.

iii) $p=5$. Then $H^2(X,\ZZ)^f\cong U\oplus U(5)\oplus U(5)$ and $\Omega_5$ is a lattice of rank
$16$ and discriminant $5^4$, cf.\ \cite[Thm.\ 4.1, Prop.\ 4.4]{GS}.

iv) $p=7$. Then $H^2(X,\ZZ)^f \cong U(7)\oplus \left(\begin{matrix}4&1\\1&2\end{matrix}\right)$
and $\Omega_7$ is a lattice of rank $18$ and discriminant $7^3$, cf.\ \cite[Thm.\ 4.1, Prop.\ 4.6]{GS}.

In \cite{GS} one finds much more interesting information, e.g.\ on the discriminant form and the action of $f$ on it,
but this is not needed here.

As usual, $U$ denotes the hyperbolic plane $\left(\begin{matrix}0&1\\1&0\end{matrix}\right)$
and $E_8$ is the unique even unimodular positive lattice of rank $8$. For any lattice $\Lambda$, the 
twist $\Lambda(n)$ by an integer $n$ denotes the lattice that is given by $n(~~,~~)_\Lambda$, where
$(~~,~~)_\Lambda$ is the pairing on $\Lambda$. 

\subsection{} Recall that  for a prime number $q$, the $q$-rank $\rk_q(\Lambda)$
of a lattice $\Lambda$ is the maximal rank of a sublattice $\Lambda' \subset\Lambda$
whose discriminant is not divisible by $q$. For the application we have in mind, only the cases
$q=2$ and $q=3$ will play a role.

As a consequence of the results of \cite{GS} one immediately finds the following

\begin{corollary}
i) If $p=5$, then   $\rk_2(\Omega_5)=\rk_3(\Omega_5)=16$. 

ii) If $p=7$, then  $\rk_2(\Omega_7)=\rk_3(\Omega_7)=18$.\qqed
\end{corollary}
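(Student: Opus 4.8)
The plan is short, since the statement unwinds directly from the definition of the $q$-rank. By definition $\rk_q(\Lambda)$ is at least the rank of any sublattice $\Lambda'\subset\Lambda$ whose discriminant is prime to $q$, and it is trivially at most $\rk(\Lambda)$. So the first (and only) observation to record is: whenever $q$ does not divide the discriminant of $\Lambda$ itself, the choice $\Lambda'=\Lambda$ already forces $\rk_q(\Lambda)=\rk(\Lambda)$.

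Next I would simply feed in the facts recalled in Section 1.1, taken from \cite{GS}: $\Omega_5$ has rank $16$ and discriminant $5^4$, and $\Omega_7$ has rank $18$ and discriminant $7^3$. In either case the discriminant is a power of an odd prime different from $3$, hence coprime to both $2$ and $3$. Applying the observation above with $q=2$ and then with $q=3$ gives $\rk_2(\Omega_5)=\rk_3(\Omega_5)=\rk(\Omega_5)=16$ and $\rk_2(\Omega_7)=\rk_3(\Omega_7)=\rk(\Omega_7)=18$, which is exactly the assertion.

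There is essentially no obstacle; the only external input is the value of the discriminants of $\Omega_5$ and $\Omega_7$, quoted from \cite{GS}. If one preferred to avoid citing the precise discriminant, it would suffice to note that $H^2(X,\ZZ)$ is unimodular and $H^2(X,\ZZ)^f$ is primitive, so $\Omega_p=(H^2(X,\ZZ)^f)^\perp$ has discriminant (group) of the same order as $H^2(X,\ZZ)^f$; for $p=5,7$ the latter is a power of $p$, as is visible from the descriptions in cases iii) and iv) of Section 1.1, and the argument goes through verbatim.
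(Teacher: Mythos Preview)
Your argument is correct and is exactly the reasoning the paper has in mind: the corollary is stated without proof (marked \qqed) precisely because the discriminants $5^4$ and $7^3$ from \cite{GS} are visibly coprime to $2$ and $3$, so $\Lambda'=\Lambda$ already realizes the maximal $q$-rank. Your optional remark via unimodularity of $H^2(X,\ZZ)$ is also fine but unnecessary here.
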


For $p=3$, the lattics $\Omega_3$ has maximal $2$-rank, but its $3$-rank is strictly smaller than
$\rk(\Omega_3)=12$, but the following weaker result will suffice.

\begin{corollary}
If $p=3$, then $\rk_2(\Omega_3)=12$ and $\rk_3(\Omega_3)\geq3$.
\end{corollary}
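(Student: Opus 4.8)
The plan is to handle the two assertions separately: the bound on the $2$-rank is immediate, while the bound on the $3$-rank needs only a little mod-$3$ linear algebra. For the $2$-rank, recall from part iii) above that $\Omega_3$ has rank $12$ and discriminant $3^6$, which is odd. Hence $\Omega_3$ is itself a sublattice of $\Omega_3$ of full rank $12=\rk(\Omega_3)$ whose discriminant is not divisible by $2$, and therefore $\rk_2(\Omega_3)=12$. (This is the same triviality that yields the previous corollary for $p=5,7$, where the discriminant happens to be prime to $2$ and $3$ simultaneously.)

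For the $3$-rank I would estimate how degenerate the reduction of the form modulo $3$ can be. Let $\bar b$ denote the symmetric $\FF_3$-bilinear form induced on $\Omega_3/3\Omega_3$. Putting a Gram matrix of $\Omega_3$ into Smith normal form over $\ZZ$ shows that the corank of $\bar b$ is the number of elementary divisors divisible by $3$, hence at most the exponent of $3$ in the determinant, namely $6$. So $\bar b$ has rank $\ge 12-6=6$, and since $3\ne 2$ it is diagonalisable over $\FF_3$; in particular there is a $3$-dimensional subspace $\bar W\subset\Omega_3/3\Omega_3$ on which $\bar b$ is nondegenerate. Now choose $v_1,v_2,v_3\in\Omega_3$ whose residues form a basis of $\bar W$. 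The Gram matrix $\bigl((v_i,v_j)\bigr)_{1\le i,j\le 3}$ reduces modulo $3$ to an invertible matrix over $\FF_3$, so its determinant is nonzero and prime to $3$; hence $v_1,v_2,v_3$ span a rank-$3$ sublattice of $\Omega_3$ with discriminant prime to $3$, which is exactly the claim $\rk_3(\Omega_3)\ge 3$.

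Running the same argument with $\bar W$ of dimension $6$ in fact gives $\rk_3(\Omega_3)\ge 6$; this could be upgraded to an equality using that the discriminant form of $\Omega_3$ is $3$-elementary of length $6$ (visible from the description of $H^2(X,\ZZ)^f$ in part iii)), or one could simply read $\rk_3(\Omega_3)$ off the explicit Gram matrix of $\Omega_3$ recorded in \cite{GS} — but only the weak bound $\ge 3$ is used in the sequel. The single point that needs any care is the inequality ``corank of $\bar b$ $\le$ $3$-adic valuation of $\det$'', which is elementary via Smith normal form (equivalently via the Jordan decomposition of $\Omega_3\otimes\ZZ_3$); everything else is formal, so I do not foresee a genuine obstacle.
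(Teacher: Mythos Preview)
Your argument is correct, and in fact slightly stronger than what the paper proves. The first part (the $2$-rank) is identical to the paper's reasoning: since $\det(\Omega_3)=\pm 3^6$ is odd, the full lattice already witnesses $\rk_2(\Omega_3)=12$. For the $3$-rank, however, you take a genuinely different route. The paper simply opens \cite[Prop.~4.2]{GS}, inspects the explicit $12\times 12$ Gram matrix of $\Omega_3$ given there, and points to three basis vectors (e.g.\ $e_1,e_7,e_{11}$) whose Gram matrix is congruent to $\mathrm{diag}(2,2,2)$ modulo $3$. Your argument, by contrast, never looks at the matrix: you use only the numerical invariants $\rk(\Omega_3)=12$ and $\mathrm{disc}(\Omega_3)=3^6$, together with the elementary Smith-normal-form observation that the corank of the mod-$3$ form is bounded by the $3$-adic valuation of the determinant, hence $\le 6$. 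This immediately yields $\rk_3(\Omega_3)\ge 6$, from which you extract the desired $\ge 3$.

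What each approach buys: the paper's method is a one-line lookup, fully explicit and easy to verify by hand, but it depends on an external reference for the actual Gram matrix. Your method is self-contained and conceptual, applies uniformly to any lattice once its rank and discriminant are known, and gives a sharper bound ($\ge 6$) for free; the trade-off is that it requires the (admittedly standard) Smith-normal-form step and the diagonalisability of symmetric forms over $\FF_3$. One small cosmetic slip: the rank and discriminant of $\Omega_3$ are recorded in item ii) of the list, not iii).
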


\begin{proof}
The first assertion is obvious and the second follows from the explicit description of the intersection
matrix of $\Omega_3$ given in \cite[Prop.\ 4.2]{GS} in terms of a basis $e_1,\ldots,e_{12}$.
We refrain from copying the matrix here, but a quick look
shows that $e_1,e_7$ and $e_{11}$ (or, alternatively, $e_1,e_5$, and $e_{11}$) span a sublattice $\Lambda'$
which modulo $3$ has a diagonal intersection matrix $diag(2,2,2)$, 
the discriminant of which is clearly not divisible by $3$.
\end{proof}

\begin{remark}
For $p=2$, the $3$-rank of $\Omega_2$ is again maximal, but the $2$-rank is zero. This explains why 
 \cite[Thm.\  6.5]{HuyMSRI} definitely does not apply to the generic symplectic involution.
\end{remark}

\subsection{} For the convenience of the reader, we recall 
the following result, which is \cite[Thm.\ 6.5]{HuyMSRI}.

\begin{theorem}\label{thm:Kneserappl}
Suppose $\rk_2(\NS(X))\geq 4$ and $\rk_3(\NS(X))\geq3$. Then any
symplectomorphism $f\in\Aut(X)$ acts trivially on $\CH^2(X)$.
\end{theorem}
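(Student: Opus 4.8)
The plan is to reduce Theorem~\ref{thm:Kneserappl} to a lattice-theoretic statement about the Mukai lattice and then to invoke a theorem of Kneser on the generation of indefinite orthogonal groups by reflections. The input from the derived category is the criterion of \cite{Huy} recalled above: if the isometry induced by $f$ on the full cohomology $H^*(X,\ZZ)$, equipped with the Mukai pairing, coincides with the isometry induced by some composition $\prod T_{E_i}$ of spherical twists, then $f^*=\id$ on $\CH^2(X)$. So it is enough to understand $f^*$ on the Mukai lattice $\widetilde H:=\widetilde H(X,\ZZ)=H^0(X,\ZZ)\oplus H^2(X,\ZZ)\oplus H^4(X,\ZZ)$. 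Since $f$ is an automorphism it fixes the fundamental class, so $f^*$ is the identity on $H^0\oplus H^4\cong U$; and since $f$ is symplectic, $f^*$ is the identity on $H^{2,0}\oplus H^{0,2}$, hence on the transcendental lattice $T(X)\subset H^2(X,\ZZ)$. Writing $\widetilde\NS:=(H^0\oplus H^4)\oplus\NS(X)\cong U\oplus\NS(X)$ for the algebraic part of $\widetilde H$, whose orthogonal complement in $\widetilde H$ is exactly $T(X)$, we conclude that $f^*$ is determined by its restriction $f^*|_{\widetilde\NS}\in O(\widetilde\NS)$. Note that $\widetilde\NS$ is indefinite of signature $(2,\rho)$, $\rho=\rk\NS(X)$, and that $\rk_q(\widetilde\NS)\ge\rk_q(\NS(X))+2$ for every prime $q$ since $U$ is unimodular; the hypotheses therefore yield $\rk_2(\widetilde\NS)\ge6$ and $\rk_3(\widetilde\NS)\ge5$.

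Next I would recall how spherical twists act. A spherical object $E\in\Db(X)$ has Mukai vector $v(E)\in\widetilde\NS$ with $v(E)^2=-2$, and $T_E$ acts on $\widetilde H$ as the reflection $s_{v(E)}$, which restricts to the identity on $v(E)^\perp\supseteq T(X)$. Conversely, every $(-2)$-class of $\widetilde\NS$ is, up to sign, such a Mukai vector: for a primitive class of positive rank this is the existence of rigid stable sheaves due to Mukai and Yoshioka, and a general $(-2)$-class is moved into the positive-rank range by a Fourier--Mukai autoequivalence (the ambiguity $\delta\leftrightarrow-\delta$ is harmless, as $s_\delta=s_{-\delta}$). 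It follows that the subgroup of $O(\widetilde H)$ generated by the cohomological actions of spherical twists restricts on $\widetilde\NS$ to the full Weyl group $W(\widetilde\NS)$ generated by all $(-2)$-reflections, while acting as the identity on $T(X)$; equivalently, every element of $W(\widetilde\NS)$, extended by $\id$ on $T(X)$, is of the form $(\prod T_{E_i})^*$. Combined with the criterion of \cite{Huy}, the theorem thus reduces to the single assertion that $f^*|_{\widetilde\NS}$ lies in $W(\widetilde\NS)$.

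To establish $g:=f^*|_{\widetilde\NS}\in W(\widetilde\NS)$ I would first pin down its invariants. By Nikulin's extension lemma \cite{Nik}, an isometry of $\NS(X)$ that extends to $H^2(X,\ZZ)$ as the identity on $T(X)$ acts trivially on the discriminant group $d(\NS(X))$; since $d(\widetilde\NS)\cong d(\NS(X))$, the isometry $g$ acts trivially on $d(\widetilde\NS)$. It also preserves the orientation of the positive-definite part of $\widetilde\NS$, being the identity on the positive part of $U$ and sending the ample cone of $\NS(X)$ to itself. Thus $g$ lies in the subgroup $\widetilde O^+(\widetilde\NS)\subseteq O(\widetilde\NS)$ of isometries acting trivially on $d(\widetilde\NS)$ and preserving the orientation of the positive part. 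Kneser's theorem on generation by reflections now identifies this subgroup with $W(\widetilde\NS)$, provided $\widetilde\NS$ is sufficiently split at the two exceptional primes $q=2$ and $q=3$; the bounds $\rk_2(\widetilde\NS)\ge6$ and $\rk_3(\widetilde\NS)\ge5$ arrange exactly this. Hence $g\in W(\widetilde\NS)$, so by the previous paragraph $f^*$ agrees on $H^*(X,\ZZ)$ with $(\prod T_{E_i})^*$ for suitable spherical objects $E_i$, and therefore $f^*=\id$ on $\CH^2(X)$ by \cite{Huy}.

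I expect the main obstacle to be the last step: one must isolate the precise Kneser-type statement and verify that the two $q$-rank inequalities translate into its hypotheses at $q=2$ and $q=3$, the prime $2$ being the delicate one because of the behaviour of even quadratic forms and of $D$-type root systems over $\ZZ_2$. A secondary point requiring care is the claim that the cohomological actions of spherical twists already exhaust $W(\widetilde\NS)$: one needs every $(-2)$-class of $\widetilde\NS$ --- including those of rank $0$ or negative --- to be a Mukai vector of a spherical object, which relies on the transitivity of the Fourier--Mukai action on $(-2)$-classes together with Orlov's description of derived autoequivalences, and for lattices not containing $U\oplus U$ this transitivity is more subtle than the naive Eichler criterion. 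The remaining ingredients --- Nikulin's extension lemma and the triviality of the discriminant action of a symplectic automorphism --- are routine.
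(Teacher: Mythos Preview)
Your proposal is correct and follows essentially the same route as the paper (which in turn is just recalling the argument of \cite{HuyMSRI}): pass to the extended N\'eron--Severi lattice $\widetilde\NS=U\oplus\NS(X)$, use Kneser's theorem under the given $2$- and $3$-rank hypotheses to write $f^*|_{\widetilde\NS}$ as a product of $(-2)$-reflections, lift each reflection to a spherical twist, and then apply the criterion of \cite{Huy}. The only difference is presentational---you phrase the invariants of $f^*|_{\widetilde\NS}$ as ``trivial on the discriminant and orientation-preserving on the positive part'' where the paper speaks of ``trivial discriminant and spinor norm''---and you are right to flag the precise formulation of Kneser's hypotheses at $q=2$ and the realizability of every $(-2)$-class as a Mukai vector of a spherical object as the points requiring care; both are handled in \cite{HuyMSRI}.
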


This theorem uses a result of Kneser, generalizing results of Wall for unimodular lattices, that shows that
under the assumptions on $\NS(X)$  every orthogonal transformation of $\NS(X)\oplus H^0(X,\ZZ)\oplus H^4(X,\ZZ)$ with trivial spinor norm and discriminant is a composition of reflections. But reflections can always be lifted to spherical twists. Note that the spherical twists $T_E$ that occur in $f^*=\prod T_{E_i}^*$ will typically  not respect $H^2$.

\begin{corollary}\label{cor:357}
Let $f:X\congpf X$ be a symplectic automorphism of a projective K3 surface $X$ and suppose that the order of $f$ is
$p=3,5$, or $7$. Then the induced action of $f$ on $\CH^2(X)$ is trivial. \qqed
\end{corollary}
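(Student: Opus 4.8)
The plan is to reduce Corollary~\ref{cor:357} directly to Theorem~\ref{thm:Kneserappl} by checking its numerical hypotheses for the relevant N\'eron--Severi lattices. Recall that for a symplectic automorphism $f$ of finite order, the action is trivial on the transcendental lattice $T(X)$, so that $\Omega_p=(H^2(X,\ZZ)^f)^\perp$ is contained in $\NS(X)$. Hence $\Omega_p$ is a sublattice of $\NS(X)$, and since passing to a sublattice can only decrease the discriminant-relevant data, one has $\rk_q(\NS(X))\geq\rk_q(\Omega_p)$ for every prime $q$. So it suffices to show $\rk_2(\Omega_p)\geq 4$ and $\rk_3(\Omega_p)\geq 3$ for $p=3,5,7$.

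For $p=5$ and $p=7$, the first Corollary above gives $\rk_2(\Omega_p)=\rk_3(\Omega_p)=\rk(\Omega_p)\in\{16,18\}$, which is far more than enough; for $p=3$, the second Corollary gives $\rk_2(\Omega_3)=12\geq 4$ and $\rk_3(\Omega_3)\geq 3$. In all three cases the hypotheses of Theorem~\ref{thm:Kneserappl} are met, so $f$ acts as $\id$ on $\CH^2(X)$. I would spell out explicitly that the containment $\Omega_p\subset\NS(X)$ is what transfers the lattice-theoretic input from \cite{GS} into a statement about $\NS(X)$, since this is the one conceptual link; everything else is a direct quotation of the preceding results.

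I do not expect a genuine obstacle here: the corollary is essentially a bookkeeping statement combining the two rank corollaries with Theorem~\ref{thm:Kneserappl}. The only point requiring a line of care is the monotonicity of the $q$-rank under passage to sublattices --- namely, if $\Lambda'\subset\Lambda$ is a sublattice of full-rank-contributing type with discriminant prime to $q$, it remains such a sublattice inside any larger lattice --- which is immediate from the definition of $\rk_q$ recalled in the excerpt. Thus the remaining cases of Theorem~\ref{thm:main}, namely $p=3,5,7$, follow at once, and only $p=2$ (covered by Voisin's theorem in \cite{VoisinSympl}) needs to be treated separately.

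Putting Corollary~\ref{cor:357} together with Voisin's result for involutions and Nikulin's restriction $p\in\{2,3,5,7\}$ on the order of a symplectic automorphism of prime order --- and noting that any symplectic automorphism of finite order has a power that is a symplectic automorphism of prime order, while the property ``$f^*=\id$ on $\CH^2(X)$'' for all such powers forces $f^*=\id$ as well by a standard descending induction on the order --- yields Theorem~\ref{thm:main} in full generality.
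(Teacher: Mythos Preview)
Your proof of Corollary~\ref{cor:357} itself is correct and is exactly the paper's (implicit) argument: $\Omega_p\subset\NS(X)$ because $T(X)\subset H^2(X,\ZZ)^f$, monotonicity of the $q$-rank under inclusion of lattices, and then a direct appeal to the two rank corollaries and Theorem~\ref{thm:Kneserappl}. Nothing to add there.

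The final paragraph, however, where you sketch the passage to Theorem~\ref{thm:main}, contains a genuine gap. The claim that ``$f^*=\id$ for all prime-order powers of $f$ forces $f^*=\id$ by a standard descending induction on the order'' fails for orders $4$ and $8$. Take $\ord(f)=4$: the only prime-order power is $f^2$, and knowing $(f^2)^*=\id$ only yields that $f^*$ is an involution on $\CH^2(X)$, not that it is the identity. There is no group-theoretic trick that closes this, since $\gcd(2,4/2)\ne 1$. The paper's argument is different and more delicate: one passes to the quotient $\bar X=X/\langle f^2\rangle$ (a K3 surface after resolving the singularities), applies Voisin's theorem to the induced involution $\bar f$ on $\bar X$, pulls back the resulting rational equivalence to obtain $2[x]=2[f(x)]$ in $\CH^2(X)$, and then uses that $\CH^2(X)$ is torsion free. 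The case $\ord(f)=8$ is handled by iterating this. So while your proof of the corollary is fine, the advertised reduction of the full theorem to the prime-order case is not ``standard'' and in fact does not go through as you describe.
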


In order to conclude the proof of Theorem \ref{thm:main}, one first invokes Nikulin's result that
a symplectic automorphism $f:X\congpf X$ of a K3 surface $X$ has order $\ord(f)\leq 8$, see \cite{Nik}.
Voisin's recent result in \cite{VoisinSympl} settles the case $\ord(f)=2$ and, according to Corollary
\ref{cor:357}, only the cases $\ord(f)=4,6,8$ remain to be proven.

 The case $\ord(f)=6$ follows
immediately, as $\ZZ/6\ZZ\cong\ZZ/3\ZZ\times\ZZ/2\ZZ$ and Corollary \ref{cor:357} applies to the generators of the two factors. For $\ord(f)=4,8$ one applies \cite{VoisinSympl} repeatedly.
Let us demonstrate this for $\ord(f)=4$, the case $\ord(f)=8$ is similar. Consider the quotient
$X\to \bar X:=X/\langle f^2\rangle$. Since $f^2$ is a symplectic involution, one knows
by \cite{VoisinSympl}  that $[x]=[f^2(x)]\in\CH^2(X)$. Now apply \cite{VoisinSympl}
again to   $\bar X$, which is a (singular) K3 surface, and the symplectic automorphism $\bar f:\bar X\congpf \bar X$,  which is of order $2$. Hence, the images $\bar x,\bar f(\bar x)\in\bar X$ are rationally equivalent. Pulled-back to $X$, this shows that $[x]+[f^2(x)]=[f(x)]+[f^3(x)]$ and hence $2[x]=2[f(x)]$,
but $\CH^2(X)$ is torsion free.  (Since it is enough to prove $[x]=[f(x)]$ for generic $x\in X$,  the singularities of $\bar X$ are of little importance. Just apply the  arguments to its minimal desingularization.)

In fact, as A.\ Sarti has pointed out to the author, the case $\ord(f)=8$ could also be deduced
from Theorem \ref{thm:Kneserappl}. Indeed, in this case $\Omega_G$ is of rank $18$ (see \cite[Prop.\ 5.1]{GS2}) and hence $\rk(\NS(X))\geq19$. By Cororllary 2.6, i) in \cite{Mor} one knows that $T(X)\subset U^{\oplus 3}$ and by Theorem 6.3 of the same paper this implies $E_8(-1)^{\oplus 2}\subset \NS(X)$, which allows one to apply Theorem \ref{thm:Kneserappl}. In fact, the same reasoning also applies to
$\ord(f)=7$. 
 %%%%%%%%%%%%%%%%%%%%%%%%%%%%%%%%%%%%% 

\end{document}